\documentclass[11pt]{amsart}
\usepackage{amsmath,amsthm,amssymb, amsfonts, amscd}
\usepackage{enumerate}
\usepackage{color}

\usepackage{braket}


\newtheorem{thm}{Theorem}[section]
\newtheorem{cor}[thm]{Corollary}
\newtheorem{lem}[thm]{Lemma}
\newtheorem{prop}[thm]{Proposition}
\newtheorem{conj}[thm]{Conjecture}

\theoremstyle{plain}
\numberwithin{equation}{thm}
\theoremstyle{remark}

\def\P{\mathbb P}

\title{Backward Orbit Conjecture for the Powering Map over Global Fields}
\author{Vijay  Sookdeo}
\address{
Vijay Sookdeo\\
Department of Mathematics\\
The Catholic University of America\\
Washington, DC 20064 \\
}

\begin{document}

\begin{abstract}
We show that the backward orbit conjecture is true for powering map 
$\phi(z)=z^d$ over a function field $K$ with a finite field of constants, and 
when $d$ is relatively prime to the characteristic of $K$.
\end{abstract}

\maketitle

\section{Introduction}
Let $K$ be a finite extension of $\mathbb{F}_p(t)$, and $\phi:\P^1 \to \P^1$ be 
a rational map of degree $d\ge 2$ defined over $K$.  Write $\overline{K}$ for 
the separable algebraic closure of $K$, and  $\phi^n$ for the 
$n$th iterate of $\phi$, . For a  point $\beta \in \P^1$, let 
$\phi^+(\beta)=\{\beta, \phi(\beta), \phi^2(\beta), \dots \}$ be the 
\emph{forward orbit} of $\beta$ under $\phi$, and let $$\phi^-(\beta) = 
\bigcup_{n\ge 0} \phi^{-n}(\beta) \subset \mathbb P^1(\overline K)$$ be the 
\emph{backward orbit} of $\beta$ under $\phi$.  We say $\beta$ is 
$\phi$-\emph{preperiodic} if and only if $\phi^+(\beta)$ is finite.

\begin{conj}\label{bc}
If $\alpha \in \mathbb{P}^1(K)$ is not $\phi$-preperiodic, then for any 
$\beta\in P^1(K)$, $\phi^-(\beta)$ contains at most finitely many points in 
$\mathbb{P}^1(\overline K)$ which are $S$-integral relative $\alpha$. 
\end{conj}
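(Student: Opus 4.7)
The plan is to argue by contradiction: suppose $(\gamma_n)_{n\ge 1}\subset\phi^-(\beta)$ is an infinite sequence of distinct points, each $S$-integral relative to $\alpha$, with $\phi^{k_n}(\gamma_n)=\beta$ and $k_n\to\infty$. The starting input is the Call--Silverman canonical height $\hat h_\phi$, which satisfies $\hat h_\phi\circ\phi=d\cdot\hat h_\phi$ and $\hat h_\phi=h+O(1)$; hence $\hat h_\phi(\gamma_n)=d^{-k_n}\hat h_\phi(\beta)\to 0$, while $\hat h_\phi(\alpha)>0$ since $\alpha$ is not preperiodic.

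Second, I would invoke equidistribution of small-height points. For each place $v$ of $K$, let $\mu_{\phi,v}$ denote the canonical measure of $\phi$ on the Berkovich analytification $\P^{1,\mathrm{an}}_{\C_v}$. By the equidistribution theorem of Baker--Rumely, Chambert--Loir, and Favre--Rivera-Letelier (together with its extensions to positive characteristic), the Galois-symmetric probability measures supported on the conjugates of $\gamma_n$ converge weakly to $\mu_{\phi,v}$. The $S$-integrality condition forces every Galois conjugate of $\gamma_n$ to remain a positive $v$-adic distance from $\alpha$ for each $v\notin S$; passing to the limit, $\alpha\notin\mathrm{supp}\,\mu_{\phi,v}$, so $\alpha$ lies in the Berkovich Fatou set at every place outside $S$.

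Third, I would convert this Fatou-set constraint into preperiodicity of $\alpha$. Using the local-global height decomposition $\sum_v\lambda_{v,\alpha}(\gamma_n)=h(\gamma_n)+O(1)$ together with $\sum_{v\notin S}\lambda_{v,\alpha}(\gamma_n)=O(1)$ from $S$-integrality, all of the height of $\gamma_n$ is concentrated at places in $S$. Pigeonholing produces a place $v\in S$ and a subsequence along which $\gamma_n\to\alpha$ in the $v$-adic metric. Expanding $\phi^{k_n}$ around $\alpha$ then yields $\phi^{k_n}(\alpha)\to\beta$ at $v$, and combined with the Fatou-set control at places outside $S$ a rigidity argument forces the forward orbit of $\alpha$ to be finite, contradicting the non-preperiodicity of $\alpha$.

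The principal obstacle is this final rigidity/diophantine step. Converting ``$\gamma_n$ $v$-adically close to $\alpha$'' into a usable iteration bound needs a function-field Roth- or Vojta-type inequality, which notoriously fails in positive characteristic due to inseparable approximations. The present paper bypasses the difficulty by specializing to $\phi(z)=z^d$ with $\gcd(d,p)=1$: the iterates $\phi^{k_n}$ remain separable, the equation $\gamma_n^{d^{k_n}}=\beta$ is fully explicit, and $S$-integrality relative to $\alpha$ can be read off valuations directly, reducing the problem to an $S$-unit equation solvable via Mason--Stothers. Handling general $\phi$ would require a uniform-in-iterates Thue/Roth inequality over function fields of positive characteristic, which is not presently available.
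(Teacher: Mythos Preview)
The statement you are addressing is Conjecture~\ref{bc}, which the paper does \emph{not} prove in general; it is stated as an open conjecture, and only the special case $\phi(z)=z^d$ with $\gcd(d,p)=1$ (Theorem~\ref{main}) is established. So there is no ``paper's proof'' of this statement to compare against.

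As for your sketch toward the general conjecture, there are two genuine gaps beyond the one you flag. In step~2, weak equidistribution of the Galois orbits toward $\mu_{\phi,v}$ does not let you integrate $\log|\,\cdot\,-\alpha|_v$, which is the function you actually need: it has a logarithmic singularity at $\alpha$, and controlling that singularity is precisely the hard part. The claim ``$\alpha\notin\operatorname{supp}\mu_{\phi,v}$'' does not follow from $S$-integrality (at a non-archimedean place the support is typically not a set of classical points at all), nor would it suffice. What is needed is a quantitative lower bound on $|\sigma(\gamma_n)-\alpha|_v$ for all but a negligible set of conjugates; this is exactly what Lemmas~\ref{lem1}--\ref{lem3} and Proposition~\ref{main-prop} supply for $z^d$, and no general substitute is known. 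In step~3, the pigeonhole/rigidity argument is not a valid strategy: even if some subsequence had $\gamma_n\to\alpha$ at a place $v\in S$, the conclusion $\phi^{k_n}(\alpha)\to\beta$ does not follow (you are iterating a non-contracting map an unbounded number of times), and there is no mechanism to deduce preperiodicity of $\alpha$ from it.

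Finally, your description of the paper's method is inaccurate: it does \emph{not} reduce to an $S$-unit equation or invoke Mason--Stothers. The argument is that integrality plus the product formula force
\[
A=\sum_{v\in M_K}\lim_{n\to\infty}\frac{1}{[K(\gamma_n):\mathbb F_p(t)]}\sum_{\sigma}\log|\sigma(\gamma_n)-\alpha|_v=0,
\]
while the explicit lower bounds of Proposition~\ref{main-prop} (proved via cyclotomic factorizations and Hensel's lemma, not unit equations) show place-by-place that $A=h(\alpha)>0$, giving the contradiction.
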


This conjecture was stated over number fields $K$ in \cite{sook} and 
proven true for the powering map by bounding the number of irreducible factors 
of  $z^n-\alpha$ for $\alpha\in K$, and using Siegel's Theorem for integral 
points on $\mathbb{G}_m(K)$.  This approach cannot work for function fields $K$ 
since there is no analogous Siegel's Theorem in this case.  By using ideas 
developed in \cite{bir}, which involve showing that the limit 
$$\sum_{v\in M_K}\lim_{n\to\infty}\frac{1}{[K(\gamma_n):\mathbb 
F_p(t)]}\sum_{\sigma: 
K(\gamma_n)/K\to\overline{K}_v}\log|\sigma(\gamma_n)-\alpha|_v $$
converges to the height of $\alpha$ when $\gamma_n\in\phi^-(\beta)$, we are able 
to establish the following result.

\begin{thm}\label{main}
 Let $S$ a finite set of places of $K$; and $\phi(z)=z^d$, with $d\ge 2$ 
relatively prime to the characteristic $K$.  If $\alpha \in \mathbb P^1(K)$ is 
not $\phi$-preperiodic, then $\phi^{-}(\beta)$ contains at most finitely many 
points in $\mathbb{P}^1(\overline K)$ which are $S$-integral relative to 
$\alpha$. 
\end{thm}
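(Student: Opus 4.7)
The plan is to assume, toward a contradiction, that $\phi^{-}(\beta)$ contains an infinite sequence $(\gamma_n)$ of distinct $S$-integral points relative to $\alpha$. Since every fiber $\phi^{-k}(\beta)$ is finite, we may arrange $\gamma_n^{d^{k_n}}=\beta$ with $k_n\to\infty$, so the canonical height $\hat h_\phi(\gamma_n)=\hat h_\phi(\beta)/d^{k_n}$ tends to $0$; the Northcott property for function fields with finite constant field then forces $[K(\gamma_n):K]\to\infty$, making $(\gamma_n)$ a sequence of small points of growing degree. Because $\alpha\in K$, for all but at most one $n$ the norm $N_{K(\gamma_n)/K}(\gamma_n-\alpha)\in K^{\times}$ is nonzero, and the product formula yields
$$
\sum_{v\in M_K}\frac{1}{[K(\gamma_n):\mathbb F_p(t)]}\sum_{\sigma:K(\gamma_n)\to\overline K_v}\log|\sigma(\gamma_n)-\alpha|_v=0.
$$

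For $\phi(z)=z^d$ the canonical $v$-adic measure $\mu_{\phi,v}$ is the Haar measure on the Berkovich unit circle $\{|z|_v=1\}$, and the standard potential-theoretic identity $\int\log|z-\alpha|_v\,d\mu_{\phi,v}=\log^+|\alpha|_v$ holds at every place. Applying the equidistribution theorem of \cite{bir} to the small-point sequence $(\gamma_n)$ against the test function $\log|z-\alpha|_v$ gives, at each $v\in M_K$,
$$
\lim_{n\to\infty}\frac{1}{[K(\gamma_n):K]}\sum_{\sigma}\log|\sigma(\gamma_n)-\alpha|_v=\log^+|\alpha|_v.
$$

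The $S$-integrality hypothesis supplies the essential non-negativity: at every place $v\notin S$ the chordal distance $[\sigma(\gamma_n),\alpha]_v$ is at least $1$, so $\log|\sigma(\gamma_n)-\alpha|_v\geq 0$ for every embedding $\sigma$. The tail of the product-formula identity over $v\notin S$ is therefore non-negative, while its finite piece over $v\in S$ converges termwise to $[K:\mathbb F_p(t)]^{-1}\sum_{v\in S}\log^+|\alpha|_v$. Fatou's lemma applied to the tail, combined with the vanishing of the total sum, yields
$$
0\;\geq\;\frac{1}{[K:\mathbb F_p(t)]}\sum_{v\in M_K}\log^+|\alpha|_v\;=\;h(\alpha).
$$
Hence $h(\alpha)=0$, which forces $\alpha\in\overline{\mathbb F_p}\cap K$; being nonzero, $\alpha$ is a root of unity and therefore $\phi$-preperiodic, the sought contradiction.

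The principal technical obstacle lies in the local analytic input: verifying that the equidistribution theorem of \cite{bir} applies to the singular kernel $\log|z-\alpha|_v$ at \emph{every} place, including places $v\in S$ and places where $\alpha$ lies on the support of $\mu_{\phi,v}$. This is precisely what the Berkovich-analytic machinery of \cite{bir} is designed to supply; the hypothesis $\gcd(d,p)=1$ enters here to ensure separability of $\phi(z)=z^d$, a standing requirement of that machinery.
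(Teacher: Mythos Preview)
Your global architecture matches the paper's: assume an infinite $S$-integral sequence $\gamma_n\in\phi^{-}(\beta)$, use the product formula to see that
\[
\sum_{v\in M_K}\frac{1}{[K(\gamma_n):\mathbb F_p(t)]}\sum_{\sigma}\log|\sigma(\gamma_n)-\alpha|_v=0,
\]
and then argue that the place-by-place limits sum to $h(\alpha)>0$. The paper likewise uses $S$-integrality to reduce to finitely many places and computes each local limit as $\log\max(1,|\alpha|_v)$.

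The substantive divergence is at the hard local step, namely the places $v$ with $|\alpha|_v=|\beta|_v=1$. You obtain
\[
\lim_{n\to\infty}\frac{1}{[K(\gamma_n):K]}\sum_{\sigma}\log|\sigma(\gamma_n)-\alpha|_v=\log^+|\alpha|_v
\]
by invoking equidistribution from \cite{bir} against the singular test function $\log|z-\alpha|_v$. The paper does \emph{not} do this: it proves the needed local input by hand. Lemmas~\ref{lem1}--\ref{lem3} and Proposition~\ref{main-prop} show, via elementary arguments with cyclotomic polynomials, Hensel's lemma, and the ultrametric inequality, that there is a constant $C>0$ with $|\alpha-\gamma|_v>C$ for every $\gamma\in\phi^{-}(\beta)$; a squeeze then gives the limit $0=\log\max(1,|\alpha|_v)$ directly, with no Berkovich machinery.

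Your final paragraph correctly flags the obstacle and then asserts that ``the Berkovich-analytic machinery of \cite{bir} is designed to supply'' it. But the paper says only that it uses ``ideas developed in \cite{bir},'' and then devotes all of Section~3 to carrying those ideas out over the function field $K$. Your black-box citation is thus exactly where the paper's content lives: you would still have to check that \cite{bir} actually proves equidistribution against $\log|z-\alpha|_v$ over function fields of positive characteristic, for $\alpha$ lying on the unit circle (the support of $\mu_{\phi,v}$), and for preimage sequences under $z^d$. If that really is available off the shelf, your route is shorter but leans on much heavier machinery; if not, what you have is an outline, and the missing piece is precisely Proposition~\ref{main-prop}. (Minor slip: the chordal distance is always $\le 1$, so ``$[\sigma(\gamma_n),\alpha]_v\ge 1$'' is not what you mean; the statement you actually use, and which follows from the integrality definition, is $|\sigma(\gamma_n)-\alpha|_v\ge 1$ for $v\notin S$.)
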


An immediate corollary, based on the functorial property of integrality, is a 
result of Conjecture \ref{bc} for Chebychev polynomials (see Section 
\ref{cheby}).

\section{Heights and Integrality}
\subsection{Heights}
Let $\alpha\in \mathbb F_p(t)\setminus \{0\}$.  When $\frak p$ is a prime ideal 
of $\mathbb F_p[t]$, we define the $v_{\frak p}(\alpha)$, the 
\emph{$\frak p$-adic valuation} of $\alpha$, to be the largest integer $n$ such 
that $\alpha\in\frak{p}^n$.  Writing $\alpha = a/b$, with $a,b\in \mathbb 
F_p[t]$ relatively prime polynomials, we define the valuation 
$v_{\infty}(\alpha)$ to be $\deg(b)-\deg(a)$.  Set $v_{\frak p}(0)=\infty$ when 
$\frak p$ is a prime ideal of $\mathbb F_p[t]$ or $\frak p = \infty$.

Fix a real number $0<\varepsilon <1$.  We define the absolute value 
$|\cdot|_{\frak p}$ to be $$|\alpha|_{\frak p} = \varepsilon^{v_{\frak 
p}(\alpha)},$$ where $\frak p$ is a prime ideal of $\mathbb F_p[t]$ or $\frak p 
= \infty$.  This defines a complete set of inequivalent absolute values on 
$\mathbb F_p(t)$, which we denote by $M_{\mathbb F_p(t)}$.  Furthermore, each 
of absolute value on $\mathbb F_p(t)$ is non-archimedean in that the 
\emph{ultrametric inequality} if satisfied: $$|\alpha + \beta|_{\frak p} \le 
\max(|\alpha|_{\frak p}, |\beta|_{\frak p}).$$
It will be useful to note that ultrametric inequality implies that if 
$|\alpha|_{\frak p} \not= |\beta|_{\frak p}$, then $|\alpha + \beta|_{\frak p} 
= \max(|\alpha|_{\frak p}, |\beta|_{\frak p}).$  We have that $M_{\mathbb 
F_p(t)}$ satisfies the \emph{product formula}:  If $\alpha \in 
\mathbb{F}_p(t)\setminus \{0\},$ then $$\prod_{\frak p \in M_{\mathbb F_p(t)}} 
|\alpha|_{\frak p} =1.$$ 

Let $K$ be a finite extension of $F=\mathbb F_p(t)$.  A \emph{place} $v$ of $K$ 
is a set of equivalent non-trivial absolute values on $K$.  By abuse of 
notation, we call $\frak p$ a place of $\mathbb F_p(t)$.  We write $K_v$ for 
the completion of $K$ at the place $v$.  The set $M_K$ will always denote the 
collection of places of $K$ normalized in the following manner: for $\alpha \in 
K$ and $v$ lying over $\frak p$, define $$|\alpha|_v=|N_{K_v/F_{\frak 
p}}|_{\frak p}^{1/[K:F]}.$$  This normalization ensures that the product 
formula also holds for the set $M_K$. For a reference to the results stated 
so far in this section, see \cite{hdg} and \cite{stich}. 

For $\alpha \in K$, we define the \emph{(absolute logarithmic) height} of 
$\alpha$ as $$h(\alpha)=\frac{1}{[K:\mathbb{F}_p(t)]} \sum_{v\in M_K} 
\log\max(1, |\alpha|_v).$$  We extend this definition to $\mathbb P^1(K) = 
K\cup \{\infty\}$ by taking $h(\infty)=0$.  The following theorem is a well-
known and important feature of height functions.

\begin{thm}[Northcott's Theorem]
 Let $K$ be a finite extension of $\mathbb F_p(t)$ and $B$ be any non-negative 
real number.  Then there are finitely many $\alpha\in K$ such that 
$h(\alpha)\le B$.
\end{thm}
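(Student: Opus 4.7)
The plan is to reduce to the base field $F = \mathbb{F}_p(t)$ via the minimal polynomial of $\alpha$ over $F$, and then verify the statement for $F$ directly; the finiteness of the constant field $\mathbb{F}_p$ is what ultimately makes the result work.

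First, since the height is Galois invariant, every conjugate of $\alpha$ over $F$ has the same value $h(\alpha)$. Let the minimal polynomial of $\alpha$ over $F$ be $P(z) = \prod_{i=1}^d (z - \alpha_i) = z^d + c_{d-1}z^{d-1} + \cdots + c_0$, with $d \le [K:F]$. Since every absolute value in sight is non-archimedean, Gauss's lemma yields $\max_k |c_k|_v = \prod_i \max(1, |\alpha_i|_v)$ at every place $v$, and after summing over places of a splitting field for $P$ this gives $h(c_k) \le d \cdot h(\alpha) \le [K:F] \cdot h(\alpha)$ for each coefficient.

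Second, I would verify the theorem directly for $K = F = \mathbb{F}_p(t)$. Any nonzero $\alpha \in F$ has a unique representation $\alpha = a/b$ with $a, b \in \mathbb{F}_p[t]$ coprime and $b$ monic. Unwinding the definitions of $|\cdot|_\mathfrak{p}$ at finite primes and $|\cdot|_\infty$ at infinity, one finds that $h(\alpha)$ equals a fixed positive constant (depending only on $\varepsilon$) times $\max(\deg a, \deg b)$. Since $\mathbb{F}_p$ is finite, there are only finitely many polynomials in $\mathbb{F}_p[t]$ of any bounded degree, so $\{\alpha \in F : h(\alpha) \le B'\}$ is finite for every $B' \ge 0$. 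Combining the two steps, the hypothesis $h(\alpha) \le B$ forces each coefficient $c_k$ to lie in a finite subset of $F$; hence there are only finitely many possibilities for $P$, and each has at most $[K:F]$ roots.

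I expect the main obstacle to be bookkeeping around the normalization $|\alpha|_v = |N_{K_v/F_\mathfrak{p}}(\alpha)|_\mathfrak{p}^{1/[K:F]}$, which has to be threaded through both the Gauss-lemma bound on the coefficients of the minimal polynomial and the explicit height computation on $\mathbb{F}_p(t)$. Once those normalization factors are correctly tracked, the argument ultimately reduces to the observation that the constant field is finite.
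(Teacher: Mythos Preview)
The paper does not actually give a proof of this statement: Northcott's Theorem is stated there as ``a well-known and important feature of height functions'' and is used as a black box (the only argument that follows it is the proof of the corollary about points of height zero). So there is nothing in the paper to compare your proposal against.

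That said, your outline is the standard and correct proof. Reducing to $F=\mathbb{F}_p(t)$ via the minimal polynomial, bounding the heights of the coefficients using the non-archimedean Gauss lemma, and then counting polynomials of bounded degree over the finite field $\mathbb{F}_p$ is exactly how one proves Northcott in the function-field setting. The only caution is the one you already flagged: with the paper's conventions one should use the degree-weighted form $|\alpha|_{\mathfrak p}=\varepsilon^{\deg(\mathfrak p)\,v_{\mathfrak p}(\alpha)}$ (implicit in the cited references) so that the product formula holds and the computation $h(a/b)=(-\log\varepsilon)\max(\deg a,\deg b)$ on $\mathbb{F}_p(t)$ comes out as stated. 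Once that normalization is tracked consistently, the argument goes through without difficulty.
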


Nothcott's Theorem implies that the only points of height zero are roots of 
unity or zero. 

\begin{cor}
 Let $\alpha\in \overline{K}$.  Then $h(\alpha)=0$ if and only if $\alpha$ is a 
root of unity or $\alpha=0$.
\end{cor}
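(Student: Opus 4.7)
The plan is to reduce the statement to Northcott's theorem applied to the field $K(\alpha)$, which is a finite extension of $\mathbb{F}_p(t)$, combined with the standard functorial property that the absolute logarithmic height satisfies $h(\alpha^n) = n \cdot h(\alpha)$ for every positive integer $n$. This multiplicativity is immediate from the definition since $|\alpha^n|_v = |\alpha|_v^n$ at every place and $\log\max(1,|\alpha|_v^n) = n\log\max(1,|\alpha|_v)$.

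For the easy direction, if $\alpha = 0$ then $h(\alpha) = 0$ by convention, and if $\alpha$ is a root of unity then $\alpha^n = 1$ for some $n \ge 1$, so $n \cdot h(\alpha) = h(\alpha^n) = h(1) = 0$ (since $|1|_v = 1 = \max(1,|1|_v)$ at every place), giving $h(\alpha) = 0$.

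For the main direction, suppose $h(\alpha) = 0$ with $\alpha \in \overline{K}$. Let $K' = K(\alpha)$, which is a finite extension of $\mathbb{F}_p(t)$. The multiplicativity of $h$ gives $h(\alpha^n) = 0$ for every $n \ge 0$, and each $\alpha^n$ lies in $K'$. By Northcott's theorem applied to $K'$ with bound $B = 0$, only finitely many elements of $K'$ have height zero, so the set $\{\alpha^n : n \ge 0\}$ is finite. Hence there exist integers $m > n \ge 0$ with $\alpha^m = \alpha^n$, i.e.\ $\alpha^n(\alpha^{m-n} - 1) = 0$, which forces $\alpha = 0$ or $\alpha^{m-n} = 1$.

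There is essentially no obstacle here; the only subtlety worth flagging is that ``root of unity'' in the function field setting includes every nonzero element of $\overline{\mathbb{F}_p}$, which is exactly the set of height-zero elements apart from $0$, so the statement is consistent with what one expects. The whole argument is a short deduction from Northcott plus the behavior of $h$ under powering.
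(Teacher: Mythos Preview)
Your proof is correct and follows essentially the same route as the paper: both use the multiplicativity $h(\alpha^n)=n\,h(\alpha)$ to produce infinitely many powers of bounded height in the finite extension $K(\alpha)$, and then invoke Northcott's Theorem to force a repetition $\alpha^m=\alpha^n$, yielding $\alpha=0$ or $\alpha$ a root of unity. The only cosmetic differences are that you handle the $\alpha=0$ case explicitly in the easy direction and derive $h(\zeta)=0$ via $h(\zeta^n)=h(1)$ rather than via $|\zeta|_v=1$ directly.
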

\begin{proof}
If $\alpha$ is a root of unity, then $h(\alpha)=0$ since $|\alpha|_v=1$ for 
each place $v$.  Suppose $h(\alpha)=0$.  Then since $h(\alpha^n)= nh(\alpha)$, 
we have that the set $\{\alpha, \alpha^2, \alpha^3, \dots \}$ is a set of 
bounded height.  Applying Northcott's Theorem over the finite extension 
$K(\alpha)$ implies that this set is finite.  Hence, $\alpha^n=\alpha^m$ for 
some $m$ and $n$.  If $\alpha\not=0$, then $\alpha^r=1$ for some $r$.
\end{proof}

\subsection{Integrality}
Let $S$ be a finite subset of $M_K$.  We say $\alpha \in \overline{K}$ is 
$S$-integral relative to $\beta\in\overline{K}$ if and only if for all 
$v\not\in S$ and embeddings $\sigma, \tau:\overline{K}/K\to \overline{K}_v$ we 
have 
\begin{align*}
&|\sigma(\beta) - \tau(\alpha)|_v \ge 1 &&\mbox{if} \; 
|\tau(\alpha)|_v \le 1\\ 
&|\sigma(\beta)|_v \le 1 &&\mbox{if} \; |\tau(\alpha)|_v > 1.
\end{align*}

This definition gives a generalization of the ring of integer in $K$, and 
it is symmetric:  $\alpha$ is $S$-integral relative to $\beta$ if and only if 
$\beta$ is $S$-integral relative to $\alpha$ (see \cite{sook}).  Enlarging the 
set of prime $S$ or extending the field $K$ enlarges the set of integral points. 
 We say $\alpha$ is $S$-integral relative to the set $D$ if and only 
if $\alpha$ is $S$-integral relative to each point in $D$.  

Let $v\in M_K$ and $\phi=[f(x,y):g(x,y)]$ be a rational map defined over $K$ 
with $a_1,\dots, a_n$ and $b_1,\dots, b_m$ the coefficients of $f$ and $g$, 
respectively.  We say $\phi$ is written in \emph{normalized form} if 
$\max(|a_1|_v,\dots,|a_n|_v,|b_1|_v,\dots,|b_m|_v)=1$.  Let $R_v=\{x \in K \mid 
|x|_v \le 1 \}$ be the valuation ring for $v$, $\mathfrak{m}_v=\{x\in K \mid 
|x|_v=1 \}$ be its maximal ideal, and $\kappa_v =
R_v/\mathfrak{m}_v$ be its residue field.  For $x\in R_v$, we say 
$\widetilde{x}$, the image of $x$ under the homomorphism $R_v \rightarrow 
\kappa_v$, is the reduction of $x$ modulo $\mathfrak{m}_v$.  Writing 
$\phi=[f:g]$ in normalized form, we let $\widetilde{\phi}$ be the 
rational map obtained by reducing the coefficients of $f$ and $g$ 
modulo $\mathfrak{m}_v$.  The map $\phi$ is said to have \emph{good reduction} 
at $v$ if $\deg(\phi)=\deg(\widetilde{\phi})$, and \emph{bad reduction} at $v$ 
otherwise.  This definition allows us to show that $S$-integrality 
behaves well functorially.

\begin{thm}\label{funct}
Suppose $\phi$ has good reduction for all places $v\not\in S$.  Then $\beta$ is 
$S$-integral relative to $\phi(\alpha)$ if and only if  $\phi^{-1}(\beta)$ is 
$S$-integral relative to $\alpha$.
\end{thm}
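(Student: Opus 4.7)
My plan is to translate $S$-integrality into the assertion that certain reductions are distinct points of $\P^1(\kappa_v)$, and then to use good reduction to interchange reduction with $\phi$. A short case analysis on whether $|\tau(\alpha)|_v\le 1$ or $|\tau(\alpha)|_v>1$ shows that $\alpha$ is $S$-integral relative to $\beta$ if and only if, for every $v\notin S$ and every pair of embeddings $\sigma,\tau:\overline{K}/K\to\overline{K}_v$, the reductions $\widetilde{\sigma(\alpha)}$ and $\widetilde{\tau(\beta)}$ are distinct in $\P^1(\kappa_v)$. This handles the affine and the infinite cases uniformly (the second subcase in the definition just says that $\widetilde{\sigma(\beta)}\ne\infty$ when $\widetilde{\tau(\alpha)}=\infty$).

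The next ingredient is the identity $\widetilde{\phi(P)}=\widetilde{\phi}(\widetilde{P})$, valid for all $P\in\P^1(\overline{K}_v)$ when $\phi$ has good reduction at $v$; this is a direct consequence of writing $\phi$ in normalized form. Since $\phi$ is defined over $K$, any embedding commutes with $\phi$, so $\widetilde{\tau(\phi(\alpha))}=\widetilde{\phi}(\widetilde{\tau(\alpha)})$. For the forward direction, assume $\beta$ is $S$-integral relative to $\phi(\alpha)$ and suppose for contradiction that some $\gamma\in\phi^{-1}(\beta)$ satisfied $\widetilde{\sigma(\gamma)}=\widetilde{\tau(\alpha)}$ at some $v\notin S$. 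Applying $\widetilde{\phi}$ gives
\[
\widetilde{\sigma(\beta)}=\widetilde{\sigma(\phi(\gamma))}=\widetilde{\phi}(\widetilde{\sigma(\gamma)})=\widetilde{\phi}(\widetilde{\tau(\alpha)})=\widetilde{\tau(\phi(\alpha))},
\]
contradicting the $S$-integrality hypothesis.

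The reverse direction is the harder one. Arguing contrapositively, suppose $\widetilde{\sigma(\beta)}=\widetilde{\tau(\phi(\alpha))}=\widetilde{\phi}(\widetilde{\tau(\alpha)})$, so that $\widetilde{\tau(\alpha)}\in\widetilde{\phi}^{-1}(\widetilde{\sigma(\beta)})$. To violate $S$-integrality of $\phi^{-1}(\beta)$ relative to $\alpha$, I would produce an actual $\gamma\in\phi^{-1}(\beta)$ whose reduction equals $\widetilde{\tau(\alpha)}$. This reduces to showing that reduction maps $\phi^{-1}(\beta)$ onto $\widetilde{\phi}^{-1}(\widetilde{\sigma(\beta)})$ surjectively. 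Since $\deg\widetilde{\phi}=\deg\phi=d$, both fibers contain $d$ points counted with multiplicity, and any element of $\phi^{-1}(\beta)$ reduces into $\widetilde{\phi}^{-1}(\widetilde{\sigma(\beta)})$, so a degree count forces surjectivity. Making this surjectivity rigorous (by a Hensel-type lifting argument over the valuation ring $R_v$, or by invoking properness of the $R_v$-model of $\P^1$ together with the degree equality on fibers) is the main technical obstacle.
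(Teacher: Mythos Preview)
The paper does not prove this theorem in-text; its entire proof is a citation to \cite{sook} (Prop.~2.3 and Cor.~2.4 there) together with the remark that those arguments carry over to global function fields. So there is no argument within this paper against which to compare yours in detail.

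That said, your approach---rephrasing $S$-integrality as the condition $\widetilde{\sigma(\alpha)}\ne\widetilde{\tau(\beta)}$ in $\P^1(\kappa_v)$ for all $v\notin S$ and all embeddings, and then exploiting the compatibility $\widetilde{\phi(P)}=\widetilde\phi(\widetilde P)$ furnished by good reduction---is the standard one and is almost certainly what the cited reference does. Your forward direction is complete as written. For the reverse direction, the surjectivity you flag as the ``main technical obstacle'' is most cleanly handled by a Gauss-lemma argument rather than Hensel: write the homogenized form $\phi(X,Y)-\sigma(\beta)$ in normalized coordinates and factor it into $d$ linear forms over $\overline K_v$, each of unit content; reducing coefficientwise then gives a factorization of $\widetilde\phi(X,Y)-\widetilde{\sigma(\beta)}$ into $d$ linear forms. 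Hence the multiset of reductions of $\phi^{-1}(\sigma(\beta))$ coincides with $\widetilde\phi^{\,-1}(\widetilde{\sigma(\beta)})$ counted with multiplicity, which gives the required surjectivity even when the fiber is ramified. A bare cardinality count, as you note, is not enough on its own.
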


\begin{proof}
 See \cite{sook}, and note that Prop. 2.3 and Cor. 2.4 is valid 
over global fields.
\end{proof}

\section{Proof of Main Theorem}

The key to proving Theorem \ref{main} involves showing that if 
$|\alpha|_v=|\beta|_v=1$, then the points in $\phi^{-}(\beta)$ are not 
$v$-adically close to $\alpha$.  We do this by considering when $\beta=1$ 
and $\beta\not=1$. 

\begin{lem}\label{lem1}
Let $v$ be a place of $K$; fix an integer $d\ge2$ which is relatively prime to 
the characteristic of $K$, and let $\alpha \in K$ with $|\alpha|_v=1$. If $r$ is 
a real number satisfying $0<r<1$, then:
\begin{enumerate}[(i)]
 \item there are finitely many roots of unity $\zeta_{d^n} \in 
\overline{K}_v$ such that $0<|1-\zeta_{d^n}|_v<r$; and
 \item there is a constant $D$ such that $|\alpha-\zeta_{d^n}|_v>D$ for all 
roots of unity $\zeta_{d^n}\in \overline{K}_v$
\end{enumerate}
\end{lem}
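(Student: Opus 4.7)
The plan is to exploit the fact that $\gcd(d,p)=1$, where $p$ is the residue characteristic at $v$ (equal to the characteristic of $K$), so that $z^{d^n}-1$ remains separable after reduction modulo the maximal ideal $\mathfrak{m}_v$ and no nontrivial $d^n$-th root of unity is congruent to $1$ in the residue field.

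For \textbf{(i)}, I would first derive the cyclotomic identity
\[
\prod_{\substack{\zeta^{d^n}=1\\ \zeta\neq 1}}(1-\zeta)=d^n,
\]
obtained by evaluating $(z^{d^n}-1)/(z-1)$ at $z=1$. Taking $v$-adic absolute values and using $|d^n|_v=1$ (since $p\nmid d$) together with the ultrametric bound $|1-\zeta|_v\le 1$ would force every factor to equal exactly $1$. Consequently no nontrivial $d^n$-th root of unity satisfies $|1-\zeta|_v<1$, so the set described in (i) is in fact empty and finiteness is immediate.

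For \textbf{(ii)}, I would apply the same identity to the ratio $\zeta/\zeta'$ to conclude $|\zeta-\zeta'|_v=1$ for any two distinct $d$-power roots of unity in $\overline{K}_v$; equivalently, reduction modulo $\mathfrak{m}_v$ is injective on the collection of all roots of unity of $d$-power order in $\overline{K}_v$. Since $|\alpha|_v=1$, the strict inequality $|\alpha-\zeta|_v<1$ would force $\widetilde{\alpha}=\widetilde{\zeta}$ in the residue field, and injectivity would allow at most one such exceptional $\zeta_0\in\overline{K}_v$. For every other $d$-power root of unity $\zeta$ we then have $|\alpha-\zeta|_v=1$; for the possible exceptional $\zeta_0$, the value $|\alpha-\zeta_0|_v$ is a fixed positive number $c_0$, provided $\alpha\neq\zeta_0$. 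Setting $D=\min(\tfrac{1}{2},\,c_0/2)$ (interpreting $c_0=+\infty$ when no such $\zeta_0$ exists) would yield the required lower bound.

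\textbf{Main obstacle:} The chief subtlety is that the stated hypotheses do not explicitly exclude $\alpha$ from being a $d^n$-th root of unity, in which case (ii) fails for $\zeta=\alpha$. This must be treated as an implicit assumption, consistent with the non-preperiodicity of $\alpha$ under $z^d$ in Theorem \ref{main}. Once that is granted, both parts reduce to the cyclotomic identity combined with the separability of $z^{d^n}-1$ modulo $\mathfrak{m}_v$.
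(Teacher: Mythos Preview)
Your argument is correct, and in fact cleaner than the paper's. Both proofs rest on the same underlying arithmetic fact---that $\gcd(d,p)=1$ forces $d$-power roots of unity to be $v$-adically separated---but the routes differ.

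For (i), the paper computes $|1-\zeta_{d^n}|_v$ as $|f_{d^n}(1)|_{\mathfrak p}^{1/[L:F]}$ where $f_{d^n}$ is the minimal polynomial of $\zeta_{d^n}$ over $F_{\mathfrak p}$, observes that $f_{d^n}(1)$ divides $\Phi_{d^n}(1)=\Phi_r(1)$ (with $r$ the radical of $d$), concludes $|f_{d^n}(1)|_{\mathfrak p}\not\to 0$, and then invokes $[L:F]\to\infty$ to get $|1-\zeta_{d^n}|_v\to 1$. Your product identity $\prod_{\zeta\neq 1}(1-\zeta)=d^n$ combined with $|d^n|_v=1$ and the ultrametric bound $|1-\zeta|_v\le 1$ gives $|1-\zeta|_v=1$ \emph{exactly} for every nontrivial $\zeta$, so the set in (i) is empty. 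This is a sharper conclusion obtained without cyclotomic polynomials or any limiting argument.

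For (ii), the two arguments are structurally the same: both use (i) to show that at most finitely many (in your version, at most one) $d$-power roots of unity lie in the open disc of radius $r$ about $\alpha$, and then pass to a uniform lower bound $D$. Your observation that reduction mod $\mathfrak m_v$ is injective on $\mu_{d^\infty}$ makes the ``at most one'' explicit, which streamlines the passage to $D$.

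You are right to flag the implicit hypothesis that $\alpha$ is not itself a $d$-power root of unity; the paper's proof of (ii) needs it just as yours does, and it is justified in the application by the non-preperiodicity of $\alpha$ under $z\mapsto z^d$.
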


\begin{proof}
 For (i), it suffices to show $|1-\zeta_{d^n}|_v\to 1$ as $n\to \infty$ 
since $|1-\zeta_{d^n}|_v\le 1$.  Assume $\zeta_{d^n} \in L$, a finite extension 
of $K$, and write $F=\mathbb{F}_p(t)$.  Let $f_{d^n}(x)$ be the monic 
irreducible polynomial of $\zeta_{d^n}$ over $F_{\frak p}$.  Then 
\begin{align*}
 |1-\zeta_{d^n}|_v &= |N_{L_v/F_{\frak 
p}}(1-\zeta_{d^n})|_{\frak p}^{1/[L:F]}\\
&=|f_{d^n}(1)|_{\frak p}^{1/[L:F]}
\end{align*}
Let $r$ be the radical of $d$, and write $\Phi_n(X)$ for the $n$th 
cyclotomic polynomial.  Note that $\Phi_{d^n}(X) = \Phi_r(X^{d^n/r})$.  Since 
$f_{d^n}(x)$ divides $\Phi_{d^n}(x)$, we have 
\begin{align*}
 |\Phi_{d^n}(1)|_{\frak p}&=|\Phi_r(1)|_{\frak p}\\
 &=|f_{d^n}(1)|_{\frak p}\cdot |h_{d^n}(1)|_{\frak p}.
\end{align*}
Since $r$ is relatively prime to the characteristic of $K$, we have 
$|\Phi_r(1)|_{\frak p}\not=0$, and this implies that $|f_{d^n}(1)|_{\frak p} 
\not\to 0$ as $n\to\infty$.  Therefore $|f_{d^n}(1)|^{1/[L:F]}_{\frak p} 
\to 1$ as $n\to \infty$ since $[L:F]\to \infty$.

To prove (ii), suppose for roots of unity $\zeta_{d^k}$ and $\zeta_{d^\ell}$, 
we have $|\alpha-\zeta_{d^k}|_v < r$ and $|\alpha-\zeta_{d^\ell}|_v < r$.  
The ultra-metric inequality implies $|\zeta_{d^k}-\zeta_{d^\ell}|_v < r$, or 
equivalently, $|1-\zeta_{d^n}|_v < r$ for some $n$.  By (i), there are 
finitely many $n$ such that $|1-\zeta_{d^n}|_v < r$ for any $0<r<1$.  Hence, 
$|\alpha-\zeta_{d^n}|_v<r$ for finitely many $n$; equivalently, there is a 
constant $D$ such that $|\alpha - \zeta_{d^n}|_v>D$ for all $\zeta_{d^n}$. 
\end{proof}

\begin{lem}\label{lem2}
Let $v$ be a place of $K$, and $\beta\in K$ with $|\beta|_v=1.$  If $d$ is 
relatively prime to the characteristic of $K$, then $|1-\beta^{d^n}|_v \not\to 
0$ and $n\to\infty$. 
\end{lem}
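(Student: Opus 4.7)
The plan is to argue by contradiction via a binomial expansion, using the hypothesis $\gcd(d,p)=1$ (where $p = \operatorname{char} K$) in the decisive way: it guarantees $|d|_v = 1$. Indeed, $d$ is a positive integer, so its image in $K$ lies in $\mathbb{F}_p \setminus \{0\}$, hence reduces to a unit in every residue field and has valuation zero at every place $v$.

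Next, I would assume for contradiction that $\epsilon_n := \beta^{d^n} - 1$ satisfies $|\epsilon_n|_v \to 0$, so in particular $|\epsilon_n|_v < 1$ for all $n$ past some threshold $N_0$. Writing
\begin{equation*}
\epsilon_{n+1} \;=\; (1+\epsilon_n)^d - 1 \;=\; d\,\epsilon_n + \binom{d}{2}\epsilon_n^2 + \cdots + \epsilon_n^d,
\end{equation*}
I would estimate each summand: the leading term has norm $|d\,\epsilon_n|_v = |\epsilon_n|_v$, while for $k\ge 2$ one has $|\binom{d}{k}\epsilon_n^k|_v \le |\epsilon_n|_v^k \le |\epsilon_n|_v^2 < |\epsilon_n|_v$. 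Applying the strict form of the ultrametric inequality noted in Section 2.1, these higher-order terms are dominated by $d\,\epsilon_n$, giving the equality $|\epsilon_{n+1}|_v = |\epsilon_n|_v$ for all $n \ge N_0$.

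The sequence $(|\epsilon_n|_v)_{n\ge N_0}$ is therefore constant, which contradicts the assumption that it tends to $0$, unless $|\epsilon_{N_0}|_v = 0$ already. That remaining possibility forces $\beta^{d^{N_0}} = 1$, so $\beta$ is a root of unity of $d$-power order; this degenerate configuration falls within the scope of Lemma \ref{lem1} and is not what Lemma \ref{lem2} is intended to address (the lemma is invoked only in conjunction with the $\beta \neq 1$ case flagged in the preamble to this section).

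The main delicate point is obtaining the \emph{equality} $|\epsilon_{n+1}|_v = |\epsilon_n|_v$ rather than only an inequality, and that rests squarely on both hypotheses acting together: $|d|_v=1$ (from the coprimality of $d$ with the characteristic) provides the size of the linear term, while $|\epsilon_n|_v < 1$ (a consequence of the assumed convergence) forces every higher-order term into strictly smaller norm. Everything else is a routine application of the ultrametric inequality.
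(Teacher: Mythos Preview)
Your argument is correct and constitutes a genuinely different, more elementary route than the paper's. The paper proceeds by Hensel's Lemma: assuming $|1-\beta^{d^{n_i}}|_v\to 0$, one applies Hensel to $f_i(x)=1-x^{d^{n_i}}$ (noting $|f_i'(\beta)|_v=1$ precisely because $\gcd(d,p)=1$) to produce $d^{n_i}$-th roots of unity $\zeta_{d^{n_i}}$ with $|\beta-\zeta_{d^{n_i}}|_v\to 0$, and then invokes Lemma~\ref{lem1}(ii) with $\alpha=\beta$ for the contradiction. Your binomial expansion bypasses both Hensel and Lemma~\ref{lem1}: the identity $\epsilon_{n+1}=d\epsilon_n+\sum_{k\ge 2}\binom{d}{k}\epsilon_n^k$ together with $|d|_v=1$ and $0<|\epsilon_n|_v<1$ forces $|\epsilon_{n+1}|_v=|\epsilon_n|_v$, so the sequence is eventually constant. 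The trade-off is that the paper's approach explains \emph{why} the obstruction is ultimately about roots of unity accumulating near $\beta$ (tying the two lemmas together conceptually), whereas yours is self-contained and shorter. Your treatment of the residual case $\epsilon_{N_0}=0$ (i.e.\ $\beta$ a $d^{N_0}$-th root of unity) is honest: the lemma as stated is in fact vacuously false there, and the paper's proof has the same tacit exclusion, since Lemma~\ref{lem1}(ii) already requires $\alpha$ not to be a $d$-power root of unity for the constant $D$ to be positive.
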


\begin{proof}
Assume $|1-\beta^{d^n}|_v \to 0$.  Then there is a sequence of positive 
integers $\{n_i\}_{i\ge 0}$ such that $|1-\beta^{d^{n_i}}|_v<1$ and 
$|1-\beta^{d^{n_i}}|_v\to 0$.  Taking $f_i(x)=1-x^{d^{n_i}}$, we have 
$|f_i(\beta)|_v<1$, and since $d$ is relatively prime to the characteristic, 
$|f_i'(\beta)|_v=1$.  By Hensel's Lemma (\cite{alain}), there exists roots of 
unity $\zeta_{d^{n_i}} \in \overline{K}_v$ such that $f_i(\zeta_{d^{n_i}})=0$, 
and $|\beta - \zeta_{d^{n_i}}|_v<|f_i(\beta)|_v \to 0$.  This contradicts 
Lemma \ref{lem1}(ii).  Hence $|1-\beta^{d^n}|_v \not\to 0$ as $n\to \infty$.
\end{proof}

\begin{lem}\label{lem3}
Let $v$ be a place of $K$ and $\phi(z)=z^d$ where $d\ge2$ is relatively prime to 
the characteristic of $K$.  Let $\alpha \in K$ with $|\alpha|_v=1$, and let 
$\beta \in K\setminus \{1\}$ with $|\beta|_v=1$.  If $r$ is a real number 
satisfying $0<r<1$, then:
\begin{enumerate}[(i)]
 \item there are at most finitely many $\xi \in \overline{K}_v$ such that 
$\xi^{d^m}=\beta^{d^n}$ for some positive integers $m > n$ satisfying 
$0<|1-\xi|_v<r$; and
 \item there is a constant $E$ such that $|\alpha-\gamma|_v>E$ for all $\gamma 
\in \phi^{-}(\beta)$.
\end{enumerate}
\end{lem}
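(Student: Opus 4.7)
The plan is to mirror the proof of Lemma~\ref{lem1}(i) and reduce (i) to a statement about the values $|1-\beta^{d^n}|_v$, to which Lemma~\ref{lem2} can be applied. From $\xi^{d^m}=\beta^{d^n}$ and $|\beta|_v=1$ I would first deduce $|\xi|_v=1$. Writing $\xi=1+\epsilon$ with $|\epsilon|_v=|1-\xi|_v<1$, the binomial expansion gives
\[
\xi^{d^m}=1+d^m\epsilon+\sum_{i\ge 2}\binom{d^m}{i}\epsilon^i;
\]
since $d$ is coprime to the characteristic of $K$ we have $|d^m|_v=1$, and ultrametricity forces $|\xi^{d^m}-1|_v=|d^m\epsilon|_v=|\epsilon|_v$. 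Combined with $\xi^{d^m}=\beta^{d^n}$ this produces the key identity $|1-\xi|_v=|1-\beta^{d^n}|_v$.

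Next I would invoke Lemma~\ref{lem2}: the right-hand side does not tend to $0$ as $n\to\infty$. Because $\beta\in K$, the quantities $|1-\beta^{d^n}|_v$ lie in the discrete set $|K^*|_v$, so there exists $\delta>0$ with $|1-\beta^{d^n}|_v\ge\delta$ whenever $\beta^{d^n}\ne 1$. The admissible values of $|1-\xi|_v$ in the interval $(0,r)$ therefore come from the finite set $|K^*|_v\cap[\delta,r)$. To pass from finitely many admissible \emph{values} to finitely many actual $\xi$, I would parametrize the solutions to $\xi^{d^m}=\beta^{d^n}$ as $\xi=\zeta\mu$, with $\mu$ a fixed $d^m$-th root of $\beta^{d^n}$ and $\zeta$ a $d^m$-th root of unity, then apply Lemma~\ref{lem1}(i) to the root-of-unity factor $\zeta$ (using the condition $|1-\zeta\mu|_v<r$ and the reduction map) to restrict $\zeta$ to a finite set.

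For (ii) I would follow the template of Lemma~\ref{lem1}(ii). Assuming $|\alpha-\gamma_1|_v<r$ and $|\alpha-\gamma_2|_v<r$ for distinct $\gamma_1,\gamma_2\in\phi^{-}(\beta)$ with $\gamma_i^{d^{m_i}}=\beta$, the ultrametric inequality gives $|\gamma_1-\gamma_2|_v<r$, equivalently $|1-\gamma_1/\gamma_2|_v<r$ since $|\gamma_2|_v=1$. The quotient $\gamma_1/\gamma_2$, after multiplication by an appropriate root of $\beta$, satisfies an equation of the form $\xi^{d^m}=\beta^{d^n}$ with $m>n$, so (i) applied to this $\xi$ forces only finitely many possibilities. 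Taking $r$ smaller than the minimum of $|1-\xi|_v$ occurring over this finite set produces the required constant $E$.

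The main obstacle is part (i): the identity $|1-\xi|_v=|1-\beta^{d^n}|_v$ is clean, but converting a finite set of admissible values of $|1-\xi|_v$ into a finite set of actual points $\xi$ requires the careful bookkeeping sketched above, controlling both the root-of-unity factor through Lemma~\ref{lem1}(i) and the possibly several pairs $(m,n)$ that realize each value; the hypothesis $m>n$ (rather than $m\ge 0$) is what prevents $\xi$ from ranging over the entire tree of $d^k$-th roots of $\beta$ and is essential for the counting to be finite.
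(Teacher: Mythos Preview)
Your route to (i) via the binomial identity $|1-\xi|_v=|1-\beta^{d^n}|_v$ is genuinely different from the paper's. The paper mimics the proof of Lemma~\ref{lem1}: it takes the minimal polynomial $g_{m,n}$ of $\xi_{m,n}$ over $K_v$, writes $|1-\xi_{m,n}|_w=|g_{m,n}(1)|_v^{1/[L:K]}$, observes that $g_{m,n}(1)\cdot k_{m,n}(1)=1-\beta^{d^n}$ so that Lemma~\ref{lem2} keeps $|g_{m,n}(1)|_v$ from tending to $0$, and then concludes $|1-\xi_{m,n}|_w\to 1$ from $[L:K]\to\infty$. In particular the paper never attempts to pass from ``finitely many admissible values'' to ``finitely many points''; the finiteness is meant to come from $|1-\xi_{m,n}|_w$ being forced toward~$1$. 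For (ii) your template matches the paper's.

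Your counting step in (i), however, does not go through. The identity you derive shows that $|1-\xi|_v$ depends only on $n$, not on $m$. Fix any $n_0$ with $0<|1-\beta^{d^{n_0}}|_v<r$ (this occurs, for instance, whenever $\beta$ reduces to $1$ in the residue field and $r>|1-\beta|_v$). For each $m>n_0$, Hensel's lemma applied to $x^{d^m}-\beta^{d^{n_0}}$ at $x=1$ yields a unique $\xi_m\in K_v$ with $\xi_m^{d^m}=\beta^{d^{n_0}}$ and $|1-\xi_m|_v=|1-\beta^{d^{n_0}}|_v<r$; when $\beta$ is not a root of unity these $\xi_m$ are pairwise distinct, so already a single value of $n$ produces infinitely many $\xi$. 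Your proposed fix via $\xi=\zeta\mu$ and Lemma~\ref{lem1}(i) cannot help: since $(d,p)=1$, distinct $d$-power roots of unity have distinct reductions, so for each pair $(m,n)$ there is \emph{exactly one} choice of $\zeta$ with $|1-\zeta\mu|_v<1$, and Lemma~\ref{lem1}(i) imposes no further restriction. The infinitude comes entirely from letting $m$ vary, which the hypothesis $m>n$ does nothing to prevent; this is the essential gap in your plan.
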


\begin{proof}
Let $\xi_{m,n}$ for an $\xi \in L$ satisfying $\xi^{d^m}=\beta^{d^n}$ for 
some $m>n$, and write $w$ for the unique extension of $v$ to $L_w$.  
It suffices to show that $|1-\xi_{m,n}|_w\to 1$ as $n\to\infty$.  Let 
$g_{m,n}(x)$ be the irreducible polynomial of $\xi_{m,n}$ over $K_v$.  Then

\begin{align*}
|1-\xi_{m,n}|_w &= |N_{L_w/K_v}(1-\xi_{m,n})|_{v}^{1/[L:K]}\\
&=|g_{m,n}(1)|_v^{1/[L:K]}
\end{align*}

Since $g_{m,n}(x)$ divides $x^{d^m}-\beta^{d^n}$, we have
\begin{align*}
|1-\beta^{d^n}|_v = |g_{m,n}(1)|_v\cdot |k_{m,n}(1)|_v.
\end{align*}
By Lemma \ref{lem2}, $|1-\beta^{d^n}|_v \not\to 0$ and $n\to\infty$, and so 
$|g_{m,n}(1)|_v\not\to0$ as $n\to\infty$.  Hence  $|g_{m,n}(1)|_v^{1/[L:k]}\to 
1$ as $n\to \infty$ since $|g_{m,n}(1)|_v\le 1$ and $[L:K]\to\infty$ as 
$n\to\infty$.

For (ii), assume there are points $\gamma_1$ and $\gamma_2$ in 
$\phi^{-}(\beta)$ 
such that $|\alpha-\gamma_1|_v<r$ and $|\alpha-\gamma_2|_v<r$.  Then the 
ultrametric inequality implies $|\gamma_1-\gamma_2|_v<r$.  Since 
$|\gamma_1|_v=|\gamma_2|_v=1$, then we can write the previous inequality as 
$|1-\xi_{m,n}|<r$ where $\xi_{m,n}^{d^m}=\beta^{d^n}$ with $m>n$.  By (i), 
there are at most finitely $m,n$ such that $|1-\xi_{m,n}|<r$, and hence there 
at most finitely many $\gamma\in\phi^{-}(\beta)$ such that $|\alpha-\gamma|<r$. 
 Equivalently, there is a constant $E$ such that $|\alpha-\gamma|>E$ for 
all $\gamma\in \phi^{-}(\beta)$.
\end{proof}

We summarize the pertinent conclusions of Lemma \ref{lem1} and Lemma \ref{lem3} 
in the following proposition.

\begin{prop}\label{main-prop}
Let $v$ be a place of $K$; write $\phi(z)=z^d$ where $d\ge2$ is relatively 
prime to the characteristic of $K$; let $\alpha,\beta \in K$ with 
$|\alpha|_v=|\beta|_v=1$; and let $r$ be a real number satisfying $0<r<1$.  Then 
there are at most finitely $\gamma\in\phi^-(\beta)$ such that 
$0<|\alpha-\gamma|<r$. Equivalently, there is a constant $C$ such that 
$|\alpha-\gamma|_v>C$ for all $\gamma \in \phi^{-}(\beta)$.  
\end{prop}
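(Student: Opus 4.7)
The plan is to observe that this proposition is essentially a case-split restatement of the two preceding lemmas, corresponding to whether $\beta$ equals $1$ or not.

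First I would unwind the definition of the backward orbit: a point $\gamma\in \phi^{-n}(\beta)$ satisfies $\gamma^{d^n}=\beta$. In the case $\beta = 1$ this means $\gamma$ is a root of unity whose order divides $d^n$, so
\[
\phi^{-}(1) \;=\; \bigcup_{n\ge 0}\{\zeta\in\overline{K}_v : \zeta^{d^n}=1\}
\]
is precisely the set of roots of unity in $\overline{K}_v$ whose order is a power of $d$. Lemma \ref{lem1}(ii), applied to $\alpha$ (which has $|\alpha|_v=1$), then produces a constant $D$ with $|\alpha - \gamma|_v > D$ for every such $\gamma$, which is the desired bound in this case.

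For the remaining case $\beta\ne 1$, the element $\beta\in K\setminus\{1\}$ with $|\beta|_v=1$ fits exactly the hypotheses of Lemma \ref{lem3}(ii), which supplies a constant $E$ with $|\alpha - \gamma|_v > E$ for all $\gamma\in\phi^{-}(\beta)$. Setting $C$ to be the constant from whichever case applies (or $\min(D,E)$ in a uniform formulation) gives the conclusion, and the equivalence with the finiteness statement is immediate: having $|\alpha-\gamma|_v>C$ for all but finitely many $\gamma$ with a fixed absolute value of $1$ is the same as having only finitely many $\gamma$ in the open $v$-adic ball of radius $r$ about $\alpha$.

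The main point is that there is no real obstacle here: the substantive analytic content lives in the two lemmas, and this proposition is a consolidation step. The only thing to verify is the explicit description of $\phi^{-}(1)$ as a union of groups of $d^n$-th roots of unity, which is immediate from $\phi(z)=z^d$ together with the hypothesis that $d$ is coprime to the characteristic (so $z^{d^n}-1$ is separable). Once that is noted, the two lemmas exhaust all possibilities for $\beta\in K$ with $|\beta|_v=1$.
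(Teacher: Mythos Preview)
Your proposal is correct and matches the paper's approach exactly: the paper itself presents this proposition without proof, introducing it as a summary of ``the pertinent conclusions of Lemma~\ref{lem1} and Lemma~\ref{lem3},'' which is precisely the case split on $\beta=1$ versus $\beta\ne 1$ that you spell out.
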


We are now ready to prove the main theorem.

\begin{proof}[Proof of Theorem \ref{main}]
The goal is to show that if there are infinitely many $\gamma_n$, with 
$\gamma_n^{d^n}=\beta$, which are $S$-integral relative to $\alpha$, then
the sum $$A=\sum_{v\in 
M_K}\lim_{n\to\infty}\frac{1}{[K(\gamma_n):\mathbb F_p(t)]}\sum_{\sigma: 
K(\gamma_n)/K\to\overline{K}_v}\log|\sigma(\gamma_n)-\alpha|_v$$ equals 
$h(\alpha)$, the height of the non-preperiodic point $\alpha$.  Since 
$h(\alpha)>0$, this will contradict the fact that the product formula and 
integrality implies $A=0$.  Indeed, let $T=\{v\in M_K \mid |\alpha|_v >1 \}$.  
Then by integrality, $|\sigma(\gamma_n)-\alpha|_v=1$ for all $v\not\in S\cup 
T$, and this gives

\begin{align*}
A &=  \sum_{v\in 
S\cup 
T}\lim_{n\to\infty}\frac{1}{[K(\gamma_n):\mathbb F_p(t)]}\sum_{
\sigma}\log|\sigma(\gamma_n)-\alpha|_v\\
&= \lim_{n\to\infty} \frac{1}{[K(\gamma_n):\mathbb F_p(t)]} 
\sum_{\sigma}\sum_{v\in S\cup T} \log|\sigma(\gamma_n)-\alpha|_v\\
&=\lim_{n\to\infty} \frac{1}{[K(\gamma_n):\mathbb F_p(t)]} 
\sum_{\sigma}\,\log\,\left(\prod_{v\in M_K}|\sigma(\gamma_n)-\alpha|_v\right)\\
&=0
\end{align*}
To show $A=h(\alpha)$, we consider the cases when $|\beta|_v=1$ and 
$|\beta|_v\not=1$.

Suppose $|\beta|_v\not=1$.  Then $|\sigma(\gamma_n)|_v \not= |\alpha|_v$ for 
all $n$ sufficiently large.  The ultrametric inequality then implies 
$$\log|\sigma(\gamma_n)-\alpha|_v = \log\max (|\sigma(\gamma_n)|_v,|\alpha|_v) 
$$ for all $n$ sufficiently large.  Now $|\sigma(\gamma_n)|_v$ remains 
unchanged when $n$ is fixed and $\sigma$ varies since 
$\sigma(\gamma^{d^n})=\beta$.  Therefore 
$$\sum_{\sigma}\log|\sigma(\gamma_n)-\alpha|_v = [K(\gamma_n):K]\cdot \log\max 
(|\sigma(\gamma_n)|_v, |\alpha|_v),$$ and since 
$\lim_{n\to\infty}|\sigma(\gamma_n)|_v=1$, this gives
\begin{align*}
 \lim_{n\to\infty} \frac{1}{[K(\gamma_n):\mathbb F_p(t)]}\sum_{\sigma} 
\log|\sigma(\gamma_n)-\alpha|_v & = \frac{1}{[K:\mathbb F_p(t)]} \log\max (1, 
|\alpha|_v).
\end{align*}

Now suppose $|\beta|_v=1$.  If $|\alpha|_v\not=1$, we immediately get that 
$\log|\sigma(\gamma_n) - \alpha|_v = \log\max(1, |\alpha|_v)$.  Therefore, we 
only need to consider when $|\alpha|_v=1$.  By Proposition \ref{main-prop}, let 
$N(r)$ be the 
number of points $\sigma(\gamma_n)$ satisfying $0<|\sigma(\gamma_n)-\alpha|<r$ 
where $0<r<1$.  Then
\begin{align*}
0 &\ge \lim_{n\to\infty} \frac{1}{[K(\gamma_n):\mathbb F_p(t)]}\sum_{\sigma} 
\log|\sigma(\gamma_n)-\alpha|_v \\ &\ge \lim_{n\to\infty} 
\frac{1}{[K(\gamma_n):\mathbb F_p(t)]} \left\{ ([K(\gamma_n):K] - N(r))\log(r) 
+ 
N(r)\log(C)  \right\}\\
&=\frac{\log r}{[K:\mathbb F_p(t)]}
\end{align*}
Taking $r\to 1$ gives $$\lim_{n\to\infty} \frac{1}{[K(\gamma_n):\mathbb 
F_p(t)]}\sum_{\sigma} \log|\sigma(\gamma_n)-\alpha|_v = 0 = \log\max (1, 
|\alpha|_v).$$  This concludes the proof that $$A=\frac{1}{[K:\mathbb F_p(t)]} 
\sum_{v\in M_K} \log\max (1, |\alpha|_v) = h(\alpha).$$
\end{proof}

\section{Chebychev Polynomials}\label{cheby}
An immediate corollary of Theorem \ref{main}, obtained via Theorem \ref{funct}, 
is that the Conjecture \ref{bc} is true for \emph{Chebychev polynomials} 
arising for the maps $\phi(z)=z^d$ where $d$ is relatively prime to the 
characteristic of $K$.  Following \cite[Ch. 6]{ADS}, we define the Chebychev 
polynomials as maps $T_d$ making the following diagram commute
$$
\begin{CD}
\mathbb G_m @>z^d>> \mathbb G_m \\
@VV\pi V  @VV\pi V \\
\P^1 @>T_d>> \P^1
\end{CD}
$$
where $\pi$ is a finite $K$-morphism. 

\begin{cor}
Conjecture \ref{bc} is true for Chebyshev polynomials $T_d$ where $d$ is 
relatively prime to the characteristic of $K$.
\end{cor}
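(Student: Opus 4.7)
The plan is to deduce the corollary from Theorem \ref{main} by transferring the problem from $T_d$ on $\P^1$ to the powering map $z^d$ on $\mathbb G_m$ via the semi-conjugacy $\pi$, then applying the functoriality of integrality in Theorem \ref{funct}. The essential point is that $\pi$ (up to coordinates, $z\mapsto z+z^{-1}$) is a fixed finite $K$-morphism of degree $2$, so it has good reduction outside a finite set of places and identifies $\phi$-orbits in $\mathbb G_m$ with $T_d$-orbits on $\P^1$.

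First I would fix a finite extension $\tilde K/K$ containing elements $w,w_0\in\overline K$ with $\pi(w)=\alpha$ and $\pi(w_0)=\beta$; since $\pi$ has degree $2$ such $\tilde K$ exists with $[\tilde K:K]\le 4$, and the fibre over $\beta$ is $\pi^{-1}(\beta)=\{w_0,w_0^{-1}\}$. From the commutative diagram one gets $\pi\circ (z^d)^n=T_d^n\circ \pi$, so any $z^d$-preperiodicity of $w$ would force $T_d$-preperiodicity of $\alpha$; hence $w$ is not $z^d$-preperiodic. The same identity shows that every $\gamma\in T_d^{-n}(\beta)$ admits a lift $\gamma'\in\overline K$ with $\pi(\gamma')=\gamma$ satisfying $(\gamma')^{d^n}\in\{w_0,w_0^{-1}\}$, so $\gamma'$ lies in the $z^d$-backward orbit of $w_0$ or of $w_0^{-1}$.

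Next I would enlarge $S$ to a finite set $\tilde S$ of places of $\tilde K$ containing every place lying over $S$ together with every place at which $\pi$ has bad reduction. Applying Theorem \ref{funct} to the map $\pi$ (over $\tilde K$ with the set $\tilde S$), the $S$-integrality of $\gamma$ relative to $\alpha=\pi(w)$ implies the $\tilde S$-integrality of $\pi^{-1}(\gamma)$, in particular of $\gamma'$, relative to $w$. Theorem \ref{main}, applied over $\tilde K$ with non-preperiodic point $w$, base point $w_0$ (and separately $w_0^{-1}$), and integrality set $\tilde S$, then yields only finitely many admissible $\gamma'$; consequently only finitely many $\gamma\in T_d^{-}(\beta)$ can be $S$-integral relative to $\alpha$.

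The only step requiring real care is the compatibility of $S$-integrality with the extension $\tilde K/K$ and with the partial lift along $\pi$: one must verify that the bad-reduction locus of $\pi$ is finite (which is immediate from the integrality of the coefficients of $z+z^{-1}$ in suitable projective coordinates) and that enlarging $K$ and $S$ preserves all relevant finiteness statements, a property recorded already in the discussion preceding Theorem \ref{funct}. Everything else, including the transfer of non-preperiodicity and the existence of the lifts $w$, $w_0$, $\gamma'$, is formal from the defining commutative diagram for $T_d$.
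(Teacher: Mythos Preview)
Your argument is correct and is precisely the approach the paper has in mind: it invokes Theorem \ref{funct} for the semi-conjugacy $\pi$ and reduces to Theorem \ref{main}, exactly as in \cite[Cor.~3.5]{sook}. You have simply written out in full the details that the paper leaves to that reference.
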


\begin{proof}
Using Theorem \ref{funct}, the proof is identical to the one given in \cite[Cor 
3.5]{sook}.
\end{proof}

\bibliographystyle{amsalpha}
\bibliography{back_orb}

\end{document}